\newcommand{\Cal}[1]{{\mathcal #1}}
\DeclareMathOperator{\id}{id}
\DeclareMathOperator{\Spec}{Spec}
\numberwithin{equation}{section}
\numberwithin{figure}{section}
\numberwithin{table}{section}
\newtheorem{thm}{Theorem}[section]
\newtheorem*{thm*}{Theorem}
\newtheorem{lem}[thm]{Lemma}
\newtheorem{cor}[thm]{Corollary}
\newtheorem{pro}[thm]{Proposition}
\newtheorem{defn}[thm]{Definition}
\newtheorem{rem}[thm]{Remark}
\newtheorem{exa}[thm]{Example}
\title[]{Ideals and congruences in $L$-algebras and pre-$L$-algebras}
  \author[Alberto Facchini]{Alberto Facchini}
\address{Dipartimento di Matematica ``Tullio Levi-Civita'', Universit\`a di Padova, 35121\linebreak Padova, Italy}
 \email{facchini@math.unipd.it}
\thanks{The first author wishes to express his gratitude to the Institut de Recherche en Ma\-th\'{e}\-ma\-tique et Physique of the Universit\'{e} catholique de Louvain for the hospitality received during his stay in Louvain-la-Neuve. }
 \author[Marino Gran]{Marino Gran}
\address{ Institut de Recherche en Math\'{e}matique et Physique, Universit\'{e} catholique de Louvain, 1348 Louvain-la-Neuve, Belgique}
 \email{marino.gran@uclouvain.be}
\thanks{The second author's work was supported by the Fonds de la Recherche Scientifique - FNRS under Grant CDR no. J.0080.23 }
  \author[Mara Pompili]{Mara Pompili}
\address{University of Graz, Institute for Mathematics and Scientific Computing, Heinrichstrasse 36, 8010 Graz, Austria}
 \email{mara.pompili@uni-graz.at}
\subjclass[2020]{}
\keywords{}
\begin{document}

\begin{abstract}
We link the recent theory of $L$-algebras to previous notions of Universal Algebra and Categorical Algebra concerning subtractive varieties,  commutators, multiplicative lattices, and their spectra. 
We show that the category of $L$-algebras is subtractive and normal in the sense of Zurab Janelidze, but neither the category of $L$-algebras nor that of pre-$L$-algebras are Mal'tsev categories, hence in particular they are not semi-abelian. Therefore $L$-algebras are a rather peculiar example of an algebraic structure.
\end{abstract}
 
\maketitle

\section{Introduction}

The aim of this paper is to link the recent fruitful theory of $L$-algebras \cite{RumpL, RumpJAA,
RV} to previous articles of Universal Algebra and Categorical Algebra concerning subtractive varieties \cite{Ursini, 
Ursini2, Zurab-Sub, Zurab},  commutators and multiplicative lattices, and their spectra \cite{FFJ}. 

$L$-algebras are related to right $\ell$-groups, projection lattices of von Neumann algebras, quantum Yang–Baxter equation, MV-algebras, braidings, and non-commuta\-tive logic.
In this paper we prove that the category of $L$-algebras is subtractive \cite{Ursini, Zurab-Sub} and normal (in the sense of \cite{Zurab}), but neither the category of $L$-algebras nor that of pre-$L$-algebras are Mal'tsev categories, hence in particular they are not semi-abelian. This shows that $L$-algebras are a rather peculiar example of algebraic structure. In general, in a pre-$L$-algebra~$X$, there is a monotone Galois
connection between the lattice $\Cal C(X)$ of congruences of $X$, and
the lattice $\Cal I(X)$ of ideals of $X$. We show that $L$-algebras do not form a variety in the sense of Universal Algebra, but their category is a normal subtractive quasivariety. The lattice $\Cal I(X)$ is a distributive lattice, as was proved in \cite{RV}. Thus the commutator of two ideals (congruences) turns out to be the intersection of the two ideals (congruences).

\medskip

We are grateful to Professor Leandro Vendramin for several suggestions concerning this paper.

\section{Basic notions}

Recall that an {\em $L$-algebra} \cite{RumpL} is a set $X$ with a binary operation $(x, y) \mapsto x \cdot y$ and a $0$-ary operation $1 \in X $ such that 
\begin {align} 
&x \cdot x = x \cdot 1 = 1, \ 1 \cdot x = x,\label{1}\\
&(x \cdot y) \cdot(x \cdot z) = (y \cdot x) \cdot (y \cdot z), \ \mbox{\rm and}\label{2}\\
&x \cdot y = y \cdot x = 1 \implies x = y \label{3}
\end{align} 
for every $x, y, z \in X$.

Similarly, we define a {\em pre-$L$-algebra} assuming that only Properties~\eqref{1} and \eqref{2} hold. These pre-$L$-algebras are called {\em unital cycloids} in \cite{RumpL}. It is easily seen that in a pre-$L$-algebra the element $1$ with Property~\eqref{1} is unique (this follows from $x\cdot x=1$.) It is called the {\em logical unit} of the pre-$L$-algebra.
Equation (\ref{2}) holds in most 
generalizations of classical logic, including intuitionistic, many-valued, and
quantum logic.

On any pre-$L$-algebra $X$ there is a natural preorder $\le$, i.e., a reflexive and transitive relation, defined by $x\le y$ if $x\cdot y=1$. A pre-$L$-algebra is an $L$-algebra if and only if this natural preorder is a partial order, that is, if and only if it is also an antisymmetric relation. Clearly, pre-$L$-algebras form a variety of algebras in the sense of Universal Algebra. (We will see in Example~\ref{4} that this is not the case for $L$-algebras.) In particular, a pre-$L$-algebra morphism is any mapping $f$ between two pre-$L$-algebras such that $f(1)=1$ and $f(x\cdot y)=f(x)\cdot f(y)$ for all $x,y$. But notice that the first condition $f(1)=1$ follows from the second, because 
\[
f(1)=f(1\cdot 1)=f(1)\cdot f(1)=1
\]
by \eqref{1}.

Let us now consider congruences on a pre-$L$-algebra $X$. We will see that they correspond to suitably defined ideals of $X$. A subset $I$ of a pre-$L$-algebra $X$ is an {\em ideal} of $X$ \cite[Definition 1]{RumpL} if \begin{align}
&1 \in I,\\ 
&x \in I \text{ and }x \cdot y \in I \implies y \in I,\\
&x \in I \implies (x \cdot y) \cdot y \in I,\\
&x \in I \implies y \cdot x \in I,\\
&x \in I \implies y \cdot (x \cdot y) \in I
\end{align}
for every $x,y\in X$. Clearly, $\{1\}$ and $X$ are ideals in any pre-$L$-algebra $X$.

For instance, it is easily seen that if $f\colon X\to Y$ is a pre-$L$-algebra morphism, then its {\em kernel}, that is, the inverse image $f^{-1}(1)$ of the logical unit $1$ of $Y$, is an ideal of $X$.

Also, if $\sim$ is a congruence on a pre-$L$-algebra $X$, the equivalence class $[1]_{\sim}$ of the logical unit $1$ of $X$ is an ideal of $X$. 

\medskip

Recall that if $(A,\le)$ and $(B,\le)$ are two partially ordered sets, a {\em monotone Galois connection} between $A$ and $B$ consists of two order-preserving mappings $f\colon A\to B$ and $g\colon B\to A$  such that $f(a)\le b$ if and only if $a\le g(b)$ for every $a\in A$ and $b\in B$. A {\em closure operator} on the partially ordered set $A$ is a mapping $c\colon A\to A$ such that $x\le c(y)$ if and only if $c(x)\le c(y)$ for every $x,y\in A$.

\begin{pro} \label{vho}
Let $X$ be a pre-$L$-algebra, $\mathcal{C}(X)$ its lattice of congruences, and $\mathcal{I}(X)$ the lattice of ideals of $X$. Define two mappings $$\varphi\colon\mathcal{C}(X)\to\mathcal{I}(X), \qquad\varphi\colon\sim{}\!\in\mathcal{C}(X)\mapsto [1]_{\sim},$$ that maps any congruence $\sim$ of $X$ to the equivalence class modulo $\sim$ of the logical unit $1$, and $$\psi\colon\mathcal{I}(X)\to\mathcal{C}(X), \qquad \varphi\colon I\in\mathcal{I}(X)\mapsto {\sim}_I,$$ where ${\sim}_I$ is the congruence of $X$ defined, for every $x,y\in X$, by $x\sim_I y$ if both $x\cdot y$ and $y\cdot x$ belong to $I$. Then:
\begin{itemize}
\item[(a)] $\varphi$ and $\psi$ are well-defined.
\item[(b)] $\varphi\psi=\id_{\mathcal{I}(X)}$;
\item[(c)] $\varphi$ and $\psi$ form a monotone Galois connection.
\item[(d)] $\psi\varphi$ is a closure operator on $\mathcal{C}(X)$.
\item[(e)] The image of $\psi$ is the set of all congruences $\sim$ of $X$ for wchich $X/{\sim}$ is an $L$-algebra.
\item[(f)] For every $\sim\,\in\mathcal{C}(X)$, $\psi\varphi(\sim)$ is the smallest congruence $\equiv$ on $X$ that contains $\sim$ and is such that $X/{\equiv}$ is an $L$-algebra.
\end{itemize}
\end{pro}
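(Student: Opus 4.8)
The plan is to verify parts (a)--(c) directly and then obtain (d)--(f) as purely formal consequences of the adjunction of (c) together with the identity $\varphi\psi=\id_{\mathcal{I}(X)}$ of (b).

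First I would settle (a). That $\varphi$ is well defined is already recorded just before the statement, since $[1]_\sim$ is an ideal for every congruence $\sim$. The real content is that ${\sim}_I$ is a congruence. Reflexivity follows from $x\cdot x=1\in I$ (ideal axiom $1\in I$), and symmetry is built into the symmetric definition of ${\sim}_I$. The only genuinely computational points are transitivity and compatibility with $\cdot$: from $x\,{\sim}_I\,y$ and $y\,{\sim}_I\,z$, i.e.\ $x\cdot y,\,y\cdot x,\,y\cdot z,\,z\cdot y\in I$, one must produce $x\cdot z,\,z\cdot x\in I$; and from $x\,{\sim}_I\,y$ one must deduce $(x\cdot w)\,{\sim}_I\,(y\cdot w)$ and $(w\cdot x)\,{\sim}_I\,(w\cdot y)$ for every $w$. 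Both are carried out using Equation~\eqref{2} together with the five defining axioms of an ideal, and this verification is where the precise form of those axioms is used. I expect this to be the main obstacle; everything after it is formal bookkeeping.

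Part (b) is a one-line check: $x\in[1]_{{\sim}_I}$ means $x\,{\sim}_I\,1$, that is $x\cdot 1\in I$ and $1\cdot x\in I$; by \eqref{1} the first holds automatically and the second reads $x\in I$, so $\varphi\psi(I)=[1]_{{\sim}_I}=I$. For (c), monotonicity of $\varphi$ and of $\psi$ is immediate from the definitions, so it remains to prove the adjunction $[1]_\sim\subseteq I \iff {\sim}\subseteq{\sim}_I$. The key elementary fact is that in any congruence $x\sim y$ forces $x\cdot y\sim 1$ and $y\cdot x\sim 1$ (apply compatibility to the pairs $x\sim y$, $y\sim y$ and to $y\sim x$, $x\sim x$, using $y\cdot y=x\cdot x=1$). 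Granting this, if $[1]_\sim\subseteq I$ and $x\sim y$, then $x\cdot y,\,y\cdot x\in[1]_\sim\subseteq I$, whence $x\,{\sim}_I\,y$; conversely, if ${\sim}\subseteq{\sim}_I$ and $x\in[1]_\sim$, then $x\,{\sim}_I\,1$ gives $x=1\cdot x\in I$.

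Finally, parts (d)--(f) follow formally. From (b) one gets $\varphi\psi\varphi=\varphi$ and $\psi\varphi\psi\varphi=\psi\varphi$, so $\psi\varphi$ is idempotent; and using the adjunction of (c) both $\sim\le\psi\varphi(\approx)$ and $\psi\varphi(\sim)\le\psi\varphi(\approx)$ are seen to be equivalent to $\varphi(\sim)\le\varphi(\approx)$, which is exactly the stated closure-operator condition, proving (d). Because $\varphi\psi=\id$, the image of $\psi$ coincides with the set of fixed points of $\psi\varphi$; and $\sim$ is such a fixed point precisely when ${\sim}={\sim}_{[1]_\sim}$, i.e.\ when $x\cdot y\sim 1$ and $y\cdot x\sim 1$ together force $x\sim y$. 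Read through the natural preorder on $X/{\sim}$, where $[x]\le[y]$ means $x\cdot y\sim 1$, this says exactly that the preorder is antisymmetric, i.e.\ that $X/{\sim}$ is an $L$-algebra, which is (e). For (f), $\psi\varphi(\sim)$ is a fixed point containing $\sim$, so $X/{\psi\varphi(\sim)}$ is an $L$-algebra by (e); and if $\sim\,\subseteq\,\equiv$ with $X/{\equiv}$ an $L$-algebra, then $\equiv$ is a fixed point by (e), so monotonicity of $\psi\varphi$ yields $\psi\varphi(\sim)\subseteq\psi\varphi(\equiv)=\equiv$, establishing minimality.
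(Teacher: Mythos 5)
Your proposal is correct and follows essentially the same Galois-connection strategy as the paper: verify (a)--(c) by hand, then obtain (d)--(f) as formal consequences of (b) and (c). Two points of comparison are worth noting. First, the step you flag as ``the main obstacle'' and do not actually carry out --- transitivity of $\sim_I$ and its compatibility with the operation --- is exactly the step the paper does not prove either: it disposes of (a) by citing \cite[Proposition 1]{RumpL}. So as written your argument for (a) is incomplete; to finish it you must either perform that computation (it does go through using identity \eqref{2} together with the five ideal axioms, but it is the only genuinely laborious part of the whole proposition) or invoke the citation as the paper does. Second, where you do argue, you are somewhat more self-contained than the paper: you prove the converse implication of the adjunction in (c) (which the paper calls ``easy''), and your route to (e) --- the image of $\psi$ equals the set of fixed points of $\psi\varphi$ by (b), and $\sim$ is a fixed point precisely when the induced preorder on $X/{\sim}$ is antisymmetric --- proves in one stroke both directions of (e), including the direction the paper dismisses with ``clearly, if $I$ is an ideal then $X/{\sim_I}$ is an $L$-algebra''. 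The paper instead verifies (e) by a direct element-level computation and phrases the minimality argument in (f) through ideals ($\equiv\,=\,\sim_I$ and $I\supseteq\varphi(\sim)$), whereas you phrase it through fixed points and monotonicity of $\psi\varphi$; these are the same argument in different clothing. In short: same architecture, slightly more formal-categorical execution on your side, with the one substantive computation outsourced by both you and the paper.
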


\begin{proof} (a) This is proved in \cite[Proposition 1]{RumpL}. Notice that here ``$\varphi$ is a well-defined mapping'' means that $\varphi(\sim):= [1]_{\sim}$ is an ideal of $I$ for every congruence $\sim$ on $X$. As far as ``$\psi$ is well-defined'' is concerned, we mean that, for every ideal $I$ of $X$, the relation $\sim_I$ on $X$, defined, for every $x,y\in X$, by $x\sim_I y$ if $x\cdot y\in I$ and $y\cdot x\in I$, is a congruence on $X$. 

(b) is trivial.

(c) $\varphi$ and $\psi$ are clearly order-preserving. In order to show that they form a monotone Galois connection, we must prove that if $\sim $ is any congruence and $I$ is any ideal, then $[1]_{\sim}\subseteq I$ if and only if $\sim{}\!\subseteq{}\!\sim_I$. Suppose $[1]_{\sim}\subseteq I$. Fix $x,y\in X$ with $x\sim y$. Then $x\cdot y\sim y\cdot y=1\in I$. Similarly $y\cdot x=1\in I$. Therefore $x\sim_I y$. The converse is easy.

(d) follows immediately from (c).

(e) Clearly, if $I$ is an ideal of $X$, then $X/{\sim_I}$ is an $L$-algebra. Conversely, let $\sim$ be a congruence with $L/\sim$ an $L$-algebra, and $I$ be the ideal $\varphi(\sim)$. We must prove that $\sim{}\!={}\!\sim_I$, i.e., that for every $x,y\in X$, $x\sim y$ if and only if $x\cdot y\in I$ and $y\cdot x\in I$. Now $x\sim y$ implies $x\cdot y=y\cdot y=1$, so $x\cdot y\in I$. Similarly, $y\cdot x\in I$. Conversely, $x\cdot y\in I$ and $y\cdot x\in I$ are equivalent to $x\cdot y\sim 1$ and $y\cdot x\sim 1$. But $L/\sim$ is an $L$-algebra, hence we have that $[x]_{\sim}=[y]_{\sim}$, that is, $x\sim y$, as desired.

(f) From (d) and (e) we know that $\psi\varphi(\sim)$ is a congruence on $X$ that contains $\sim$ and that $X/{\psi\varphi(\sim)}$ is an $L$-algebra. If $\equiv$ is any other congruence on $X$ that contains $\sim$ and is such that $X/{\equiv}$ is an $L$-algebra, then $\equiv{}\!\!=\!\!{}\sim_I$ for some ideal $I$ by (e), so that $\sim_I\supseteq\sim$. It follows that $\varphi(\sim_I)\supseteq\varphi(\sim)$, that is, $I\supseteq\varphi(\sim)$. Therefore $\equiv{}={}\sim_I{}={}\psi(I)\supseteq\psi\varphi(\sim)$, as desired.\end{proof}

As a trivial consequence of the previous proposition, we have that for any pre-$L$-algebra $X$ there is a one-to-one correspondence between ideals of $X$ and congruences $\sim$ on $X$ for which $X/{\sim}$ is an $L$-algebra. Cf.~\cite[Corollary~1]{RumpL}.

\bigskip

Clearly, the category of all $L$-algebras is a full reflective subcategory of the category of all pre-$L$-algebras. The left adjoint of the inclusion associates with any pre-$L$-algebra $X$ the $L$-algebra $X/{\sim}$, where $\sim$ is the congruence on $X$ defined, for every $x,y\in X$, by $x\sim y$ if $x\cdot y=1$ and $y\cdot x=1$.

\bigskip

Also notice that for every pre-$L$-algebra morphism between two $L$-algebras, the kernel pair always corresponds to an ideal of the domain. This occurs because any pre-$L$-subalgebra of an $L$-algebra is an $L$-algebra.

\section{L-algebras do not form a variety}\label{4}

\begin{exa}{\rm 
    Let $X=\{x,y,z,1\}$ be the $L$-algebra given by
    Table \ref{tab:L} and 
    let $Y=\{a,b,1\}$ be the magma
    given by Table \ref{tab:notL}. 

    \begin{table}[ht]
    \begin{tabular}{|c|cccc|}
        \hline 
         & $x$ & $y$ & $z$ & $1$\tabularnewline
        \hline 
        $x$ & $1$ & $y$ & $z$ & $1$\tabularnewline
        $y$ & $1$ & $1$ & $x$ & $1$\tabularnewline
        $z$ & $1$ & $x$ & $1$ & $1$\tabularnewline
        $1$ & $x$ & $y$ & $z$ & $1$\tabularnewline
        \hline 
        \end{tabular}
        \caption{An $L$-algebra.}
        \label{tab:L}
    \end{table}
    
    \begin{table}[ht]
    \begin{tabular}{|c|ccc|}
    \hline 
     & $a$ & $b$ & $1$\tabularnewline
    \hline 
    $a$ & $1$ & $1$ & $1$\tabularnewline
    $b$ & $1$ & $1$ & $1$\tabularnewline
    $1$ & $a$ & $b$ & $1$\tabularnewline 
    \hline
    \end{tabular}
    \caption{A pre-$L$-algebra that is not an $L$-algebra.}
    \label{tab:notL}
    \end{table}
    
    One checks that $Y$ is not an $L$-algebra, as 
    $a\cdot b=b\cdot a=1$ but $a\ne b$. 
    The surjective map $f\colon X\to Y$, $f(x)=f(1)=1$, $f(y)=a$ and $f(z)=b$, satisfies
    $f(u\cdot v)=f(u)\cdot f(v)$ for all $u,v\in X$ and $f(1)=1$. 
    Since the image of $f$ is not an $L$-algebra, 
    it follows from Birkhoff's theorem (see for example \cite[Theorem 3.1]{Cohn})
    that the class of $L$-algebras is not a variety. }
\end{exa}

\section{Subtractive and normal categories}
Let us recall some definitions that are of interest in categorical algebra. A variety of universal algebras $\mathbb V$ is {\em subtractive} \cite{Ursini} if its algebraic theory contains a constant $0$ and a binary term $s(x,y)$ with the properties that $s(x,x)= 0$ and $s(x,0)= x$.

\begin{lem}\label{Lemma }
The variety $\mathsf{PreLAlg}$ of pre-$L$-algebras is a subtractive variety.
 \end{lem}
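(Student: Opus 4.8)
The plan is to produce explicitly the constant and the binary term demanded by the definition of subtractivity. Since the only nullary operation in the signature of pre-$L$-algebras is the logical unit, I would set $0 := 1$. For the binary term I would take
\[
s(x,y) := y \cdot x,
\]
which is a legitimate term in the signature $\{\cdot, 1\}$. The whole proof then reduces to checking the two defining identities $s(x,x) = 0$ and $s(x,0) = x$.

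For the first identity, $s(x,x) = x \cdot x = 1 = 0$, which is immediate from the first line of \eqref{1}. For the second, $s(x,0) = s(x,1) = 1 \cdot x = x$, again by \eqref{1}, this time using that $1$ is a left unit. Both checks rely only on the axioms in \eqref{1}; in particular the mixed associativity-type identity \eqref{2} plays no role, so the argument works already at the level of the weaker structure.

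I expect no substantive obstacle: the entire content is in choosing the correct term. The one point to flag is the order of the arguments. One must use $s(x,y) = y \cdot x$ and not $x \cdot y$, because the verification $s(x,0) = 1 \cdot x = x$ exploits that $1$ is a \emph{left} unit; the alternative $x \cdot 1 = 1$ would force $s(x,0) = 1$ and violate the requirement $s(x,0) = x$. With the correct choice the lemma follows at once.
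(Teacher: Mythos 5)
Your proof is correct and coincides with the paper's own argument: both take the constant to be the logical unit $1$ and the term $s(x,y) = y \cdot x$, verifying $s(x,x)=1$ and $s(x,1)=1\cdot x = x$ via the identities in \eqref{1}. Your additional remark about why the order of arguments matters (i.e., why $x \cdot y$ would fail) is a sensible clarification but does not change the substance.
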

 \begin{proof}
It suffices to choose the term $s(x,y) = y \cdot x$ in the theory of pre-$L$-algebras: this term is such that $$s(x,x) = 1$$ and $$s(x, 1)= 1 \cdot x = x.$$
 \end{proof}

The definition of subtractive variety was extended to a categorical context by Z. Janelidze in \cite{Zurab-Sub}. When a category $\mathcal C$ is \emph{pointed}, that is it has a zero object $0$, the property of subtractivity can be defined as follows.
In a pointed category consider a reflexive relation $(R,r_1,r_2, e)$ on an object $X$, where $r_1$, $r_2$ are the projections and $e \colon X \rightarrow R$ is the morphism giving the reflexivity: $r_1 \circ  e = 1_X = r_2 \circ e$. One says that the relation $(R,r_1,r_2, e)$ is {\em right} (resp. {\em left}) {\em punctual} if there is a morphism $t \colon X \rightarrow R$ (resp. $s \colon X \rightarrow R$) such that $r_2 \circ t = 1_X$ and $r_1 \circ t =0$ (resp. $r_2 \circ s = 0$ and $r_1 \circ s =1_X$). 
\begin{defn}\label{subtractive-def}\cite{Zurab-Sub}
A finitely complete pointed category is \emph{subtractive} if any right punctual reflexive relation is left punctual.
\end{defn}
As shown in \cite{Zurab-Sub}, a pointed variety is subtractive in the sense of Definition \ref{subtractive-def} if and only if is subtractive in the sense of \cite{Ursini}. In particular the variety $\mathsf{PreLAlg}$ is a subtractive category, and this implies the following:
\begin{lem}\label{LAlg-Sub}
     The category $\mathsf{LAlg}$ is subtractive.
\end{lem}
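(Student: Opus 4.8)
The plan is to exploit the fact that $\mathsf{LAlg}$ sits inside $\mathsf{PreLAlg}$ as a full subcategory closed under the finite limits relevant to Definition~\ref{subtractive-def}, and then transport the subtractivity of $\mathsf{PreLAlg}$ across this inclusion.

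First I would record the structural facts. The one-element algebra $\{1\}$ is both an $L$-algebra and a pre-$L$-algebra, and it is the zero object of $\mathsf{PreLAlg}$; since it lies in $\mathsf{LAlg}$ it is the zero object there as well, so $\mathsf{LAlg}$ is pointed. Next I would verify that $\mathsf{LAlg}$ is finitely complete and that the inclusion $\mathsf{LAlg}\hookrightarrow\mathsf{PreLAlg}$ preserves finite limits. For products this is immediate: a componentwise product of $L$-algebras is again an $L$-algebra, since antisymmetry of the natural preorder is tested coordinatewise. For equalizers one uses the remark at the end of Section~2, that any pre-$L$-subalgebra of an $L$-algebra is an $L$-algebra; the equalizer of two morphisms $f,g\colon X\to Y$ in $\mathsf{PreLAlg}$ is the subalgebra $\{x\in X\mid f(x)=g(x)\}$, which is therefore an $L$-algebra whenever $X$ is. Hence finite limits in $\mathsf{LAlg}$ are computed exactly as in $\mathsf{PreLAlg}$.

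With these preliminaries in place the argument is a one-line transfer. Let $(R,r_1,r_2,e)$ be a right punctual reflexive relation on an $L$-algebra $X$ in $\mathsf{LAlg}$, witnessed by $t\colon X\to R$ with $r_2 t=1_X$ and $r_1 t=0$. Because the inclusion preserves finite limits, $(R,r_1,r_2,e)$ together with $t$ is equally a right punctual reflexive relation in $\mathsf{PreLAlg}$. Since $\mathsf{PreLAlg}$ is subtractive by Lemma~\ref{Lemma }, there is a morphism $s\colon X\to R$ in $\mathsf{PreLAlg}$ with $r_2 s=0$ and $r_1 s=1_X$. But $X$ and $R$ are $L$-algebras, so by fullness of $\mathsf{LAlg}$ in $\mathsf{PreLAlg}$ the morphism $s$ already lives in $\mathsf{LAlg}$, where it witnesses left punctuality of $(R,r_1,r_2,e)$. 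As the reflexive relation was arbitrary, $\mathsf{LAlg}$ is subtractive.

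The only point requiring genuine care—and the step I expect to be the main obstacle—is the claim that finite limits agree in the two categories, i.e.\ that $\mathsf{LAlg}$ is closed under finite products and subobjects inside $\mathsf{PreLAlg}$. Once this is secured, subtractivity is a property expressed entirely through the zero object and finite-limit data, so it is automatically inherited by a full subcategory that is closed under finite limits and contains the zero object; fullness is exactly what guarantees that the left-punctuality witness produced in $\mathsf{PreLAlg}$ is a morphism of $L$-algebras.
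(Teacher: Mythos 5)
Your proof is correct and follows essentially the same route as the paper: transport subtractivity across the full, faithful, limit-preserving inclusion $\mathsf{LAlg}\hookrightarrow\mathsf{PreLAlg}$, using fullness to conclude that the left-punctuality witness is an $L$-algebra morphism. The only cosmetic difference is that you verify limit preservation by hand (closure under products and subalgebras), whereas the paper gets it for free from the fact that the inclusion is a right adjoint of the reflection described in Section~2.
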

\begin{proof}
This follows from the fact that the inclusion functor $U \colon \mathsf{LAlg} \rightarrow \mathsf{PreLAlg}$ is full, faithful, preserves all limits (as any right adjoint), hence it preserves and reflects punctual right (resp. left) reflexive relations, and the zero object. 
\end{proof}

The next result we shall prove concerns $L$-algebras again. They turn out to form a normal category in the following sense:
\begin{defn}\label{normal} \cite{Zurab} 
Let $\mathcal C$ be a finitely complete category with a zero object $0$. Then $\mathcal C$ is a \emph{normal} category if
\begin{enumerate}
\item any morphism $f \colon A \rightarrow B$ admits a factorization $f= m \cdot q$, where $q$ is a normal epimorphism (=a cokernel) and $m$ is a monomorphism:
$$
\xymatrix{A \ar[rr]^f \ar@{->>}[dr]_q & & B \\
&{C\, \, } \ar@{>->}[ur]_m &
}
$$
\item normal epimorphisms are stable under pullbacks, i.e., given any pullback  
$$
\xymatrix{{A \times_ C B} \ar@{->>}[r]^-{p_2} \ar[d]_{p_1} &B \ar[d]^g  \\A  \ar@{->>}[r]_f &  C}
$$
in~$\mathcal C$, where $f$ is a normal epimorphism, then $p_2$ is a normal epimorphism.
\end{enumerate}
\end{defn}
Equivalently, one can define a normal category as a regular category \cite{Barr} with a zero object with the property that any regular epimorphism is a cokernel. 

Observe then that $\mathsf{LAlg}$ is a quasivariety, since it is the class of algebras determined by adding a finite number of implications to the theory of $\mathsf{PreLAlg}$.
Example~\ref{4} shows that these implications cannot be ``transformed" into an equivalent set of identities, since in that case $\mathsf{LAlg}$ would then be a subvariety of $\mathsf{PreLAlg}$ and then it would be stable in it under quotients, and this is not the case. As a consequence $\mathsf{LAlg}$ is not a variety of algebras.

\begin{pro}\label{Proposition}
The category $\mathsf{LAlg}$ of $L$-algebras is a normal subtractive category.
\end{pro}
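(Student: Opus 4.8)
The subtractivity of $\mathsf{LAlg}$ is already established in Lemma \ref{LAlg-Sub}, so the plan is to prove that $\mathsf{LAlg}$ is moreover normal. I would use the characterization recalled just after Definition \ref{normal}: a finitely complete category with a zero object is normal precisely when it is regular and every regular epimorphism in it is a cokernel. Accordingly I would split the argument into three tasks: exhibit a zero object, verify regularity, and show that each regular epimorphism arises as a cokernel.

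For the zero object I would take the one-element $L$-algebra $\{1\}$, which is at once initial and terminal. For regularity, the crucial observation is that $\mathsf{LAlg}$ is a quasivariety closed under products and under subalgebras inside the variety $\mathsf{PreLAlg}$ (any pre-$L$-subalgebra of an $L$-algebra is an $L$-algebra, and a product of $L$-algebras is again antisymmetric for the natural order). Hence finite limits in $\mathsf{LAlg}$ are computed exactly as in $\mathsf{PreLAlg}$, and I would check that the regular epimorphisms are precisely the surjective homomorphisms: a surjection between $L$-algebras is the coequalizer of its kernel pair already in the exact category $\mathsf{PreLAlg}$, and since its codomain is an $L$-algebra the reflector leaves it unchanged, so it remains a coequalizer in $\mathsf{LAlg}$; conversely, since the reflector $X \mapsto X/{\sim}$ has surjective units, every coequalizer in $\mathsf{LAlg}$ is surjective. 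The $(\text{regular epi},\text{mono})$ factorization is then the image factorization $X \twoheadrightarrow f(X) \hookrightarrow Y$, and pullback-stability of surjections is inherited from $\mathsf{PreLAlg}$, where the pullbacks are computed.

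The heart of the argument is the third task, and here I would lean on Proposition \ref{vho}. Given a regular epimorphism $q \colon X \to Q$, set $I := q^{-1}(1)$; this is an ideal, and the ideal axioms force $I$ to be closed under the operation (from $b \in I$ one gets $a \cdot b \in I$ for all $a$), so $I$ is a sub-$L$-algebra and yields an inclusion $i \colon I \hookrightarrow X$. Since $Q$ is an $L$-algebra and the class of $1$ in the kernel-pair congruence $\sim$ of $q$ equals $I$, Proposition \ref{vho}(e) identifies $\sim$ with $\sim_I$, so $q$ is, up to isomorphism, the quotient $X \to X/{\sim_I}$. I would then verify that $q$ is the cokernel of $i$: any $g \colon X \to Z$ in $\mathsf{LAlg}$ with $g \circ i = 0$ sends $I$ into $\{1\}$, hence $I \subseteq J := g^{-1}(1)$ as ideals; applying Proposition \ref{vho}(e) to the $L$-algebra $Z$ shows the congruence induced by $g$ is $\sim_J$, and monotonicity of $\psi$ gives $\sim_I \subseteq \sim_J$, so $g$ factors uniquely through $q$. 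Thus every regular epimorphism is a cokernel.

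The main obstacle I anticipate is not the cokernel identification, which follows cleanly from the Galois connection of Proposition \ref{vho}, but rather the careful verification of regularity, namely pinning down that the regular epimorphisms of the quasivariety $\mathsf{LAlg}$ coincide with the surjective homomorphisms and that the relevant limits and factorizations descend from $\mathsf{PreLAlg}$ through the reflection. Once regularity and the zero object are secured, the equivalence recalled after Definition \ref{normal} reduces normality to the cokernel statement, and combining this with Lemma \ref{LAlg-Sub} yields that $\mathsf{LAlg}$ is a normal subtractive category.
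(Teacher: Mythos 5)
Your proof is correct, and its overall skeleton matches the paper's: reduce normality to ``regular category in which every regular epimorphism is a cokernel,'' obtain regularity from the quasivariety structure of $\mathsf{LAlg}$, and derive the cokernel property from the antisymmetry axiom of $L$-algebras. The difference is in how the cokernel step is executed. The paper argues directly with the terms $t_1(x,y)=x\cdot y$ and $t_2(x,y)=y\cdot x$: given a surjection $f\colon A\to B$ and a map $g$ killing the kernel $K$, the factorization $\phi(b)=g(a)$ is well defined because $f(a)=f(a')$ forces $t_i(a,a')\in K$, hence $t_i(g(a),g(a'))=1$, hence $g(a)=g(a')$; this exhibits the $1$-regularity of $\mathsf{LAlg}$ in the sense of Beutler and ties the result to the universal-algebraic literature on $1$-regular and ideal-determined varieties. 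You instead recycle Proposition~\ref{vho}: the kernel-pair congruence of a surjection onto an $L$-algebra is $\sim_I$ for $I$ the kernel ideal (part (e)), and monotonicity of $\psi$ converts the inclusion of kernel ideals $I\subseteq J$ into the inclusion of congruences $\sim_I\subseteq\,\sim_J$ that yields the factorization. The mathematical core is the same --- the proof of Proposition~\ref{vho}(e) is precisely the paper's well-definedness computation --- so your route buys economy by reusing the Galois connection already established, while the paper's buys a self-contained computation and an explicit bridge to Beutler's notion. Your verification of regularity (regular epimorphisms coincide with surjections, image factorization, pullback stability, all descending from $\mathsf{PreLAlg}$ through the reflection) is also more detailed than the paper's, which simply cites the fact that any quasivariety is a regular category; both are adequate.
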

\begin{proof}
 Let us check that in $\mathsf{LAlg}$ every surjective homomorphism is a cokernel. The terms $t_1(x,y) = x \cdot y$ and $t_2(x,y) = y \cdot x$ have the property that $$ (\forall i \in \{ 1,2\}  \,\,  t_i(a,b)  =  1) \Leftrightarrow  a = b. $$
 These terms will attest the $1$-regularity (in the sense of Beutler \cite{Be}) of $\mathsf{LAlg}$ (also see \cite[Theorem 2.2]{JMU}).
 
 Note that the terms $t_1(x,y) = x \cdot y$ and $t_2(x,y) = y \cdot x$ also satisfy the identities $t_i(x,x)=1$, for $i \in \{1, 2\}$. Let $f \colon A \rightarrow B$ be a surjective homomorphism, $\kappa \colon K \rightarrow A$ be the inclusion of its kernel $K$ in $A$ (that we shall omit to simplify the notations), and $g \colon A \rightarrow C$ be any homomorphism such that $(g \circ \kappa) (k) = g(k) = 1$, for every $k \in K$. Now, for any $b \in B$ there is an $a \in A$ such that $f(a)=b$. Let us prove that, by setting $\phi (b) = g(a)$, we get a well-defined function. To this end, we will show that if $a$ and $a'$ are such that $f(a)= f(a')$ one always has that $g(a)=g(a')$. First observe that 
$$f( t_i (a,a') ) = t_i (f(a) , f(a')) =  t_i (f(a) , f(a))  = 1,$$
hence $t_i (a,a') \in K$. This implies that 
 $$t_i (g(a), g(a'))= g(t_i (a, a'))=1,$$
 so that $g(a)= g(a')$, and $\phi$ is well-defined. It remains to prove that $\phi$ is a homomorphism: this is easy, because $\phi \circ f =g$ is a homomorphism by assumption and $f$ is a surjective homomorphism. The uniqueness of the factorization $\phi$ is clear, and $f$ is then necessarily the cokernel of its kernel $\kappa$. 
 
 To conclude that $\mathsf{LAlg}$ is a normal category, now it suffices to observe that any quasivariety is a regular category (see \cite{PV}, for instance) since surjective homomorphisms are always stable under pullbacks in a quasivariety. From Lemma \ref{LAlg-Sub} it follows that $\mathsf{LAlg}$ is indeed a normal subtractive category.
\end{proof}
The subtractive variety $\mathsf{PreLAlg}$ of pre-$L$-algebras then contains the normal subtractive quasivariety $\mathsf{LAlg}$ of $L$-algebras.
One might then wonder whether $\mathsf{LAlg}$ is also a Mal'tsev category \cite{CLP}. The answer to this question is negative, as we are now going explain. For this, let us fix some notations.
The {\em kernel pair} $(Eq(f), p_1, p_2)$ of a morphism $f\colon A \to B$ is the (effective) equivalence relation obtained by  the pullback of $f$ along itself:
$$
\xymatrix{Eq(f) \ar@{->>}[r]^-{p_2} \ar[d]_{p_1} &A \ar[d]^f  \\A  \ar@{->>}[r]_f &  B.}
$$
In the case of a quasivariety of universal algebras, then the kernel pair of a homomorphism $f \colon A \rightarrow B$ is the congruence $$Eq(f) = \{ (a,a') \in A \times A \, \mid \, f(a) = f(a')\}.$$

We'll be interested in the following two elements $L$-algebra $X$ whose multiplication is defined in the following Table:
\begin{table}[ht]
    \begin{tabular}{|c|cc|}
    \hline 
     & $0$ & $1$ \tabularnewline
    \hline 
    $0$ & $1$ &  $1$ \tabularnewline
    $1$ & $0$ & $1$ \tabularnewline
    \hline
    \end{tabular}
    \label{tab:reflexive}
    \end{table}
    
    \noindent Consider then the relation $R = \{(0,1), (1,0), (1,1) \}$ on $X$: this is easily seen to be a subalgebra of the product $L$-algebra $X \times X$.
    
Write $p_1 \colon R \rightarrow X$ and $p_2 \colon R \rightarrow X$ for the first and the second projections, and $Eq(p_1)$ and $Eq(p_2)$ for the congruences associated with these homomorphisms, namely $$Eq(p_1) = \{((0,1),(0,1)), ((1,0),(1,0)), ((1,1),(1,1)), ((1,0),(1,1)) \} $$
and
$$Eq(p_2) = \{((0,1),(0,1)), ((1,0),(1,0)), ((1,1),(1,1)), ((0,1),(1,1)) \}. $$
Then, clearly, 
$$(1,0) Eq(p_1) (1,1) Eq(p_2) (0,1)$$
showing that $$((1,0), (0,1)) \in Eq(p_2) \circ Eq(p_1).$$ However, $$((1,0), (0,1)) \not\in Eq(p_1) \circ Eq(p_2),$$ hence 
$$Eq(p_1) \circ Eq(p_2)\not=  Eq(p_2) \circ  Eq(p_1).$$
This shows that:

\begin{pro}
    
\label{NotMal'tsev}
The categories $\mathsf{LAlg}$ and $\mathsf{PreLAlg}$ are not Mal'tsev categories. 
\end{pro}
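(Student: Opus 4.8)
The plan is to invoke the standard characterization of Mal'tsev categories in terms of permutability of equivalence relations. Recall that a regular category $\mathcal C$ is a Mal'tsev category if and only if any two equivalence relations $S$ and $T$ on a common object permute, i.e. $S \circ T = T \circ S$ (see \cite{CLP}); for a variety of universal algebras this specialises to the classical fact that $\mathcal C$ is Mal'tsev precisely when it is congruence-permutable, equivalently when its theory admits a Mal'tsev term $p(x,y,z)$ with $p(x,x,y)=y$ and $p(x,y,y)=x$. Both $\mathsf{PreLAlg}$ and $\mathsf{LAlg}$ are regular: the former is a variety, and the latter is a quasivariety, hence regular as recalled in the proof of Proposition~\ref{Proposition}. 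So it suffices to produce, in each category, a single object carrying two equivalence relations that do not permute.

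First I would observe that the object $R$ constructed above, being a subalgebra of $X \times X$, is an $L$-algebra, and is therefore simultaneously an object of $\mathsf{LAlg}$ and of $\mathsf{PreLAlg}$. The two projections $p_1,p_2\colon R \to X$ are morphisms in both categories, and their kernel pairs $Eq(p_1)$ and $Eq(p_2)$ are effective equivalence relations on $R$. Since the inclusion $U\colon \mathsf{LAlg}\to\mathsf{PreLAlg}$ preserves all limits (Lemma~\ref{LAlg-Sub}), these kernel pairs, and more generally all the relevant finite limits, are computed in exactly the same way in the two categories; concretely, in either category $Eq(p_i)$ is the congruence $\{(r,r')\in R\times R \mid p_i(r)=p_i(r')\}$ displayed above.

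Next I would record that in a regular category the relational composite of two equivalence relations on $R$ coincides with their set-theoretic composite as subsets of $R\times R$, because this composite is obtained by an image factorization and the forgetful functor to $\mathsf{Set}$ preserves the relevant regular epi--mono factorizations. Consequently the computation carried out just before the statement, namely that $((1,0),(0,1))\in Eq(p_2)\circ Eq(p_1)$ while $((1,0),(0,1))\notin Eq(p_1)\circ Eq(p_2)$, is precisely the assertion that $Eq(p_1)$ and $Eq(p_2)$ fail to permute, and it holds verbatim in each of the two categories. By the characterization recalled above, this forbids either $\mathsf{LAlg}$ or $\mathsf{PreLAlg}$ from being a Mal'tsev category.

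I expect the only genuine subtlety to lie in the transfer of the counterexample between the two categories and in identifying the abstract relational composite with the naive set-theoretic one. Once it is clear that $R$ together with the two congruences $Eq(p_1),Eq(p_2)$ lives identically in $\mathsf{LAlg}$ and in $\mathsf{PreLAlg}$, and that composition of these congruences is the usual composition of relations, the conclusion is immediate from the failure of permutability already exhibited. In particular, as a by-product, $\mathsf{PreLAlg}$ admits no Mal'tsev term.
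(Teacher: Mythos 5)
Your proposal is correct and follows essentially the same route as the paper: it uses the very same two-element $L$-algebra $X$, the subalgebra $R\subseteq X\times X$, and the non-permuting kernel pairs $Eq(p_1)$, $Eq(p_2)$, concluding via the permutability characterization of Mal'tsev categories, with the counterexample transferred to $\mathsf{PreLAlg}$ because it lives identically in both categories. The extra details you supply (regularity of the quasivariety, limit preservation by the inclusion, agreement of relational and set-theoretic composition) are just explicit versions of what the paper leaves implicit.
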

\begin{proof}
   The fact that $\mathsf{LAlg}$ is not a Mal'tsev category follows from the fact that the two congruences $Eq(p_1)$ and $Eq(p_2)$ on the $L$-algebra $X$ above do not permute in the sense of composition of relations. Since the category $\mathsf{LAlg}$ is stable in $\mathsf{PreLAlg}$ under subalgebras and products in $\mathsf{LAlg}$, the same counter-example also shows that $\mathsf{PreLAlg}$ is not a Mal'tsev category. \end{proof}
In particular, the above proposition implies that $\mathsf{LAlg}$ and $\mathsf{PreLAlg}$ are not semi-abelian categories.
\begin{rem}{\rm 
It is well-known that any subtractive variety $\mathbb V$ (with a constant $1$) is ``permutable at $1$'', which means that, for any pair of congruences $R$ and $S$ on any  algebra $X$ in $\mathbb V$, the following implication holds:
$$(x,1) \in S \circ R \quad \Leftrightarrow  \quad (x,1) \in R \circ S$$ 

In the case of the variety $\mathsf{PreLAlg}$ we have the term $s(x,y) = y \cdot x$. Accordingly, when  $(x,1) \in S \circ R$, from the existence of a $y$ such that $x R y S 1$, one deduces that $$x = s(x,1) S s(x,y) R s(y,y)= 1,$$ that is, $(x,1) \in R \circ S$. The variety $\mathsf{PreLAlg}$ is then ``permutable at $1$'', even though it is not congruence permutable.}
\end{rem}

\section{Commutators}

In Proposition~\ref{vho} we have considered, for a pre-$L$-algebra $X$, the lattice of congruences $\mathcal{C}(X)$, and the lattice of ideals $\mathcal{I}(X)$. These are complete lattices because any intersection of congruences (of ideals) is a congruence (an ideal). Let us focus onto the case of ideals. Notice that ideals of $X$ are pre-$L$-subalgebras of $X$ (essentially because, for a congruence $\sim$, $x\sim 1$ and $y\sim 1$ imply $x\cdot y\sim 1$). Since the lattice $\mathcal{I}(X)$ is complete, there is an obvious notion of ideal of $X$ generated by a subset of~$X$.

Let us consider the notion of commutator of two ideals of an $L$-algebra $X$. 
%(Il pezzo che segue deve essere guardato e riscritto seriamente da Marino, e messo in fondo all'articolo, dopo aver fatto vedere che non e' una categoria di Mal'tsev, che non è semiabeliana, quindi nel definire il commutatore si deve finire nel caso studiato da George e Cristina, nei commutatori studiati da Aldo Ursini,...) 
In Group Theory, if we have two normal subgroups $M$ and $N$ of a group $G$, the commutator $[M,N]$ is the smallest normal subgroup of $G$ for which group multiplication $\mu\colon M\times N\to G/[M,N]$, $\mu(m,n)=mn[M,N]$, where $mn[M,N]$ is the coset of $mn$ in the quotient $G/[M,N]$, is a group homomorphism. This argument can be repeated for $L$-algebras, as follows.

Let $X$ be an $L$-algebra and $I,J$ be two ideals of $X$. Define their {\em commutator} $[I,J]$ as the smallest ideal of $X$ for which the  multiplication $\cdot$ in $X$, i.e., the mapping $\mu\colon I\times J\to X/[I,J]$, $\mu(i,j)=[i\cdot j]_{\sim_{[I,J]}}$, is an $L$-algebra morphism. 
Notice that the ideal $[I,J]$ is always contained in $I\cap J$. This follows from the remark that
the mapping $\mu\colon I\times J\to X/I\cap J$ is clearly an $L$-algebra morphism. 
One actually has the following
%\begin{lem} If $I,J$ are two ideals of an $L$-algebra $X$, then the commutator $[I,J$ is the ideal of $X$ generated by the set of all elements $((i\cdot i')\cdot(j\cdot j'))\cdot (i\cdot j)\cdot(i'\cdot j'))$ and 
%$(i\cdot j)\cdot(i'\cdot j'))\cdot((i\cdot i')\cdot(j\cdot j'))$, where %$i,i'$ range in $I$ and $j,j'$ range in $J$. Moreover, the ideal $[I,J$ is contained in $I\cap J$.\end{lem}

%\begin{proof} For any ideal $K$ of $X$, the multiplication $\mu\colon I\times J\to X/K$, $\mu(i,j)=[i\cdot j]_{\sim_{K}}$, is an $L$-algebra morphism if and only if $\mu((i,j)\cdot(i',j'))=\mu(i,j)\cdot\mu(i',j')$ for every $i,i'\in I$ and $j,j'\in J$. That is, if and only if $[(i\cdot i')\cdot(j\cdot j'))]_{\sim_K}=[(i\cdot j)\cdot(i'\cdot j')]_{\sim_K}$, i.e., if and only if $(i\cdot i')\cdot(j\cdot j'))]{\sim_K}(i\cdot j)\cdot(i'\cdot j')$. Equivalently, if and only if all the elements $((i\cdot i')\cdot(j\cdot j'))\cdot (i\cdot j)\cdot(i'\cdot j'))$ and 
%$(i\cdot j)\cdot(i'\cdot j'))\cdot((i\cdot i')\cdot(j\cdot j'))$ belong to $K$. This proves the first part of the statement.

%The second part of the statement follows from the remark that the mapping 
%the mapping $\mu\colon I\times J\to X/I\cap J$, $\mu(i,j)=[i\cdot j]_{\sim_{I\cap J}}$, is clearly an $L$-algebra morphism. \end{proof}

\begin{pro}\label{zjrt} For every pair $I,J$ of ideals of an $L$-algebra $X$, one has $$[I,J]=I\cap J.$$\end{pro}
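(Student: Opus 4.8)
The plan is to prove the two inclusions $[I,J]\subseteq I\cap J$ and $I\cap J\subseteq[I,J]$ separately. The first inclusion is already essentially given in the text preceding the statement: since $[I,J]$ is by definition the \emph{smallest} ideal making the multiplication $\mu\colon I\times J\to X/[I,J]$ an $L$-algebra morphism, and since $\mu\colon I\times J\to X/(I\cap J)$ is readily checked to be such a morphism, the minimality of $[I,J]$ forces $[I,J]\subseteq I\cap J$. So the real content lies entirely in the reverse inclusion.

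For $I\cap J\subseteq[I,J]$, I would argue directly that the multiplication map $\mu\colon I\times J\to X/[I,J]$ being an $L$-algebra morphism forces $I\cap J$ to collapse into the class of $1$ modulo ${\sim}_{[I,J]}$. The key observation is that for $i\in I$ and $j\in J$, the morphism condition relates $\mu(i,j)=[i\cdot j]$ to the product of $\mu$ evaluated componentwise; exploiting the $L$-algebra identities \eqref{1}, especially $x\cdot x=1$ and $1\cdot x=x$, together with the defining relation ${\sim}_{[I,J]}$ (namely $x{\sim}_{[I,J]}y$ iff $x\cdot y,y\cdot x\in[I,J]$), one extracts that any element lying in both $I$ and $J$ must be ${\sim}_{[I,J]}$-equivalent to $1$, hence lies in $\varphi({\sim}_{[I,J]})=[I,J]$ by Proposition~\ref{vho}(b). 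Concretely, for $w\in I\cap J$ I would feed the pair $(w,w)$ or pairs involving $w$ and $1$ into the morphism equation and read off that $w\in[I,J]$.

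The step I expect to be the main obstacle is making the morphism condition on $\mu$ precise and squeezing the right elements out of it. Unlike the group case, where the bilinearity of multiplication immediately yields the classical commutator relations, here $\mu$ is defined on the product $L$-algebra $I\times J$ and its being a morphism means $\mu\bigl((i_1,j_1)\cdot(i_2,j_2)\bigr)=\mu(i_1,j_1)\cdot\mu(i_2,j_2)$, i.e.\ $[(i_1\cdot i_2)\cdot(j_1\cdot j_2)]=[i_1\cdot j_1]\cdot[i_2\cdot j_2]$ modulo ${\sim}_{[I,J]}$. The delicate point is choosing the arguments $(i_1,j_1),(i_2,j_2)$ so that the two sides differ exactly by an element of $I\cap J$ whose membership in $[I,J]$ I want to conclude; I would look for substitutions such as $i_1=j_2=1$, $i_2=j_1=w$ for $w\in I\cap J$, so that one side reduces via $1\cdot x=x$ and $x\cdot x=1$ to something trivial while the other isolates $w$.

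Once the reverse inclusion is secured, the proof concludes immediately, and as the introduction notes this has the pleasant consequence that the commutator coincides with intersection, reflecting the distributivity of $\mathcal{I}(X)$ established in \cite{RV}.
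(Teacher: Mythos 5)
Your overall strategy coincides with the paper's: the first inclusion is handled by the same minimality argument, and for the reverse inclusion you propose, exactly as the paper does, to show that every $w\in I\cap J$ satisfies $[w]=[1]$ in $X/[I,J]$, whence $w=1\cdot w\in[I,J]$ by Proposition~\ref{vho}(b). Your formulation of the morphism condition, $[(i_1\cdot i_2)\cdot(j_1\cdot j_2)]=[i_1\cdot j_1]\cdot[i_2\cdot j_2]$, is also correct. However, the one concrete substitution you offer fails, and since you yourself flag this step as the main obstacle, the gap is genuine. With $(i_1,j_1)=(1,w)$ and $(i_2,j_2)=(w,1)$ both sides collapse to $[1]$: the left side is $[(1\cdot w)\cdot(w\cdot 1)]=[w\cdot 1]=[1]$, and the right side is $[1\cdot w]\cdot[w\cdot 1]=[w]\cdot[1]=[w\cdot 1]=[1]$, using $x\cdot 1=1$ from \eqref{1}. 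So this choice yields only the trivial identity $[1]=[1]$ and isolates nothing.

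The substitutions that actually work must involve the diagonal pair $(w,w)$, placed in the correct slot. The paper takes $(i_1,j_1)=(w,w)$ and $(i_2,j_2)=(1,w)$: then the left side is $[(w\cdot 1)\cdot(w\cdot w)]=[1\cdot 1]=[1]$, while the right side is $[w\cdot w]\cdot[1\cdot w]=[1]\cdot[w]=[w]$, giving $[w]=[1]$ as required. (Symmetrically, $(i_1,j_1)=(w,1)$ with $(i_2,j_2)=(w,w)$ also works; a check of all pairs with entries in $\{1,w\}$ shows these two are the \emph{only} informative choices, and even the order of the two pairs matters.) The subtlety your heuristic misses is that $w\cdot 1=1$ and $w\cdot w=1$ annihilate almost everything, so the lone surviving occurrence of $1\cdot w=w$ must sit in a position where it is not subsequently killed: in your substitution the $[w]$ that appears is immediately destroyed because it occurs as the left factor, $[w]\cdot[1]=[1]$, whereas in the paper's substitution it occurs as the right factor, $[1]\cdot[w]=[w]$, and survives. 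Once the correct substitution is in place, the rest of your argument goes through and coincides with the paper's proof.
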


\begin{proof}
We only need to prove that $I\cap J \subseteq [I,J]$. For this it will suffice to show that, for any $x \in I \cap J$, its equivalence class $[x]_{\sim_{I\cap J}}$, that will be simply written $[x]$, is the neutral element in the quotient $X/[I,J]$:
$$[x]= [1].$$
By assumption, for any $i\in I$, $j \in J$,  one has the equality 
$$([x]\cdot [x])\cdot ([i] \cdot [j]) = ([x] \cdot [i]) \cdot ([x] \cdot [j]).$$
By choosing $i=1$ and $j=x$ we get
$$([x]\cdot [x]) \cdot ([1] \cdot [x]) = ([x] \cdot [1])\cdot ([x] \cdot [x]),$$
from which it follows that
$[x] = [1],$ as desired.
\end{proof}

In particular, this result implies that the only abelian algebras in the quasivariety of $L$-algebras are the trivial ones:

\begin{cor}\label{good}
    Let $X$ be an abelian $L$-algebra. Then $|X|=1$. 
\end{cor}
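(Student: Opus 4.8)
The plan is to read the corollary off directly from Proposition~\ref{zjrt}, specialized to the case where both ideals are the whole algebra. First I would recall what ``abelian'' means in this commutator-theoretic setting: an $L$-algebra $X$ is \emph{abelian} precisely when the commutator of its top ideal with itself is trivial, i.e.\ when $[X,X]=\{1\}$. Concretely, by the definition of the commutator preceding Proposition~\ref{zjrt}, this says that the smallest ideal for which the multiplication $\mu\colon X\times X\to X/[X,X]$ becomes an $L$-algebra morphism is the bottom ideal $\{1\}$; equivalently, that the binary operation $\cdot\colon X\times X\to X$ is itself an $L$-algebra morphism. So the first thing I would record is that the whole algebra $X$ is indeed an ideal of $X$ (noted right after the definition of ideal in Section~2), which guarantees that the commutator $[X,X]$ is defined and that Proposition~\ref{zjrt} applies with $I=J=X$.

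Then the computation is immediate. Applying Proposition~\ref{zjrt} with $I=J=X$ gives
\[
[X,X]=X\cap X=X.
\]
Combining this with the defining property of abelianness, $[X,X]=\{1\}$, forces $X=\{1\}$, and hence $|X|=1$.

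I do not expect any genuine obstacle here, since all the substantive work has already been carried out in Proposition~\ref{zjrt}: the corollary is a one-line specialization to $I=J=X$. The only point that deserves an explicit word is the bookkeeping at the start, namely the translation of the abstract notion of an abelian object into the concrete equation $[X,X]=\{1\}$, together with the observation that the whole algebra is a legitimate ideal so that this commutator is meaningful and the proposition can be invoked verbatim.
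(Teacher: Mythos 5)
Your proof is correct and coincides with the paper's own argument: both unwind the definition of abelian as $[X,X]=\{1\}$ and apply Proposition~\ref{zjrt} with $I=J=X$ to conclude $X=X\cap X=[X,X]=\{1\}$. The extra bookkeeping you include (that $X$ is itself an ideal, so the commutator is defined) is a sensible, harmless elaboration of the same one-line specialization.
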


\begin{proof}
By Proposition \ref{zjrt} the condition 
$[X,X] = \{1\}$ gives $X \cap X = X = \{1\}.$
\end{proof}

%\begin{corollary} For every ideal $I$ of an $L$-algebra $X$, $[I,I]=I$.\end{corollary}

%\begin{proof} For every ideal $I$, the $L$-algebra $I/[I,I]$ is abelian. By Proposition \ref{good} $I/[I,I]$ must be the trivial $L$-algebra with one element. Hence $[I,I]=I$.
  %  \end{proof}
\begin{rem}
\emph{The fact that the commutator $[I,J]$ of two ideals is simply their intersection $I \cap J$ is not surprising, since the category $\mathsf{LAlg}$ is congruence distributive, as it follows from Proposition \ref{vho} and the fact that the lattice of ideals on each $L$-algebra is distributive \cite{RV}. It would then be interesting to revisit the results on commutators of congruences in terms of pseudogroupoids in varieties \cite{JP2} in the more general context of quasivarieties (also see \cite{KMK}).}
\end{rem}

We now consider the multiplicative lattice $(\mathcal{I}(X),\cap)$ in the sense of \cite{FFJ}, which has been implicitly considered in \cite{RV}. Now, primes ideals studied in \cite{RV} agree with the notion of prime elements of \cite{FFJ}. Notice that an ideal $P$ of an $L$-algebra $X$ is prime if and only if $P$ is a $\wedge$-irreducible element of the lattice $\mathcal{I}(X)$.

Recall that an ideal $I$ of $X$ is {\em semiprime} if, for every ideal $J$, $[J,J]\subseteq I$ implies $J\subseteq I$. Hence Proposition \ref{zjrt} trivially implies that:

\begin{cor} In an $L$-algebra, every ideal is semiprime.\end{cor}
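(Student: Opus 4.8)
The plan is to show that, in an $L$-algebra, every ideal is semiprime as an immediate consequence of Proposition~\ref{zjrt}, which identifies the commutator $[J,J]$ of an ideal $J$ with itself as the intersection $J \cap J = J$. First I would unwind the definition: an ideal $I$ is semiprime precisely when, for every ideal $J$, the inclusion $[J,J] \subseteq I$ forces $J \subseteq I$. So the whole argument reduces to evaluating the left-hand side of this implication using the commutator formula established earlier.

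The key step is simply to substitute. By Proposition~\ref{zjrt}, applied with both ideals equal to $J$, one has $[J,J] = J \cap J = J$. Hence the hypothesis $[J,J] \subseteq I$ of the semiprimeness condition is literally the statement $J \subseteq I$, which is exactly the conclusion we must reach. The implication therefore holds trivially, and since this works for every ideal $J$, every ideal $I$ is semiprime.

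\begin{proof}
Let $I$ be an ideal of the $L$-algebra $X$, and let $J$ be any ideal of $X$ with $[J,J] \subseteq I$. By Proposition~\ref{zjrt}, $[J,J] = J \cap J = J$, so the hypothesis $[J,J] \subseteq I$ reads $J \subseteq I$, which is precisely the required conclusion. As $J$ was arbitrary, $I$ is semiprime.
\end{proof}

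There is essentially no obstacle here: the content lies entirely in the preceding commutator computation, and the present corollary is a one-line formal deduction from it. The only point to be careful about is purely bookkeeping, namely recalling that the definition of semiprime quantifies over \emph{all} ideals $J$ and involves the self-commutator $[J,J]$ rather than a commutator of two distinct ideals, so that the special case $I = J$, $J = J$ of Proposition~\ref{zjrt} is the relevant instance to invoke.
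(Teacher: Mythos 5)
Your proof is correct and is exactly the argument the paper intends: the corollary is stated as a trivial consequence of Proposition~\ref{zjrt}, and your substitution $[J,J] = J \cap J = J$, which turns the semiprimeness hypothesis into its own conclusion, is precisely that deduction. Nothing is missing.
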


Thus, in an $L$-algebra, every ideal is an intersection of prime ideals \cite{FFJ}. Now, in any multiplicative lattice, the lattice of all semiprime elements is isomorphic to the lattice of all open subsets of the Zariski spectrum. The Zariski spectrum is always a sober space \cite{FFJ,RV}. Hence we find that:

\begin{pro}{\rm \cite{RV}} For an $L$-algebra $X$, the lattice $\mathcal{I}(X)$ of all ideals of $X$ is isomorphic to the lattice of all open subsets of the sober topological space $\Spec(X)$. In particular, the lattice $\mathcal{I}(X)$ is a complete distributive lattice.\end{pro}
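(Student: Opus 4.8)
The plan is to assemble the facts recorded just above into the framework of multiplicative lattices from \cite{FFJ}. The decisive input is the Corollary immediately preceding the statement: in an $L$-algebra every ideal is semiprime. Consequently the collection of semiprime elements of the multiplicative lattice $(\mathcal{I}(X),\cap)$ is the whole lattice $\mathcal{I}(X)$ itself, and the proposition will follow by specializing general results of \cite{FFJ} to this situation.

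First I would record that $(\mathcal{I}(X),\cap)$ genuinely is a multiplicative lattice in the sense of \cite{FFJ}. Since $\mathcal{I}(X)$ is a complete lattice (arbitrary intersections of ideals are ideals), it remains to observe that the multiplication is the commutator $[I,J]$, which by Proposition~\ref{zjrt} equals $I\cap J$; thus the product is the meet, with $\{1\}$ as bottom element and $X$ as top and multiplicative identity, and it distributes over arbitrary joins. I would also confirm, as noted in the excerpt, that the prime ideals of \cite{RV} are exactly the $\wedge$-irreducible, i.e.\ prime, elements of this multiplicative lattice, so that $\Spec(X)$ coincides with the Zariski spectrum of $(\mathcal{I}(X),\cap)$ used in \cite{FFJ}.

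Next I would invoke the general theorem of \cite{FFJ} asserting that, for any multiplicative lattice, the lattice of its semiprime elements is isomorphic to the lattice of open subsets of its Zariski spectrum, and that this spectrum is always a sober space. Combining this with the fact that the semiprime elements exhaust $\mathcal{I}(X)$, we obtain an isomorphism of lattices $\mathcal{I}(X)\cong\mathcal{O}(\Spec(X))$ with $\Spec(X)$ sober, which is precisely the first assertion of the proposition.

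Finally, since the open subsets of any topological space form a frame, and in particular a complete distributive lattice, the isomorphism transports completeness and distributivity back to $\mathcal{I}(X)$, yielding the second assertion. The only genuine obstacle lies in the third step, namely checking that the abstract notions of \cite{FFJ}---multiplicative lattice, semiprime element, Zariski spectrum, sobriety---match the concrete structures carried by $\mathcal{I}(X)$ and by the prime spectrum of \cite{RV}. Once this dictionary is in place, every claim of the proposition is an immediate consequence of the cited general results together with the observation that every ideal of an $L$-algebra is semiprime.
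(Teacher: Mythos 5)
Your proposal is correct and follows essentially the same route as the paper: the paper also treats $(\mathcal{I}(X),\cap)$ as a multiplicative lattice in the sense of \cite{FFJ}, uses the corollary that every ideal is semiprime (via $[I,J]=I\cap J$) together with the identification of prime ideals with $\wedge$-irreducible elements, and then invokes the general result that the semiprime elements form a lattice isomorphic to the open sets of the (sober) Zariski spectrum. Your additional care in checking the dictionary between the notions of \cite{FFJ} and \cite{RV} is exactly the point the paper passes over implicitly.
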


In view of Proposition~\ref{zjrt}, several classical notions of Algebra trivialize for $L$-algebras.
For instance, solvable $L$-algebras, nilpotent $L$-algebras, $L$-algebras with empty Zariski spectrum are only those with one element, the centralizer of any nontrivial ideal is the trivial ideal, the center of any $L$-algebra is the trivial ideal, and the central series and the derived series are always stationary. 

Notice that maximal ideals are prime, because the zero element is $\wedge$-irreducible in the lattice of two elements. It would be interesting to describe simple $L$-algebras.


\begin{thebibliography}{99}
\bibitem{Ursini2} P. Aglian\o and A. Ursini, {\em  On subtractive varieties. II. General properties}, Algebra Universalis {\bf 36} (1996), no. 2, 222--259. 
	\bibitem{Barr}  M. Barr, {\em Exact Categories}, in: ``Exact Categories and Categories of Sheaves'', Lect. Notes Math. {\bf{236}}, Springer, 1971, pp. 1--120.
	\bibitem{Be} E. Beutler, \emph{An ideal theoretic characterization of varieties of abelian $\Omega$-groups}, Algebra Universalis 8 (1978), 91--100.
\bibitem{CLP} A. Carboni, J. Lambek and M.C. Pedicchio, {\em Diagram chasing in Mal'cev categories}, J. Pure Appl. Algebra {\bf 69} (1990), 271--290.
\bibitem{Cohn} P. M.  Cohn, 
''Universal algebra'',
Second edition. Math. and its Applications {\bf 6}, D. Reidel Publishing Co., Dordrecht-Boston, Mass., 1981. 
 \bibitem{FFJ} A. Facchini, C. A. Finocchiaro and G. Janelidze, {\em Abstractly constructed prime spectra}, Algebra Universalis  {\bf 83 }(1)(2022), Paper No. 8, 38 pp. 
	\bibitem{Fichtner} K. Fichtner,, \emph{Varieties of universal algebras with ideals}, Math. USSR-Sbornik, Vol.4 (1968), 411--418.
	\bibitem{ID} G. Janelidze, L. M\'arki, W. Tholen and A. Ursini, {\em Ideal determined categories},
Cah. Top. G\'eom. Diff. Cat\'eg. \textbf{51}(2) (2010), 115--125.
\bibitem{JMU} G. Janelidze, L. Marki, and A. Ursini, {\em Ideals and clots in universal algebra and in semi-abelian categories}, J. Algebra 307 (2007), 191--208.
%\bibitem{JP1} G. Janelidze and M. C. Pedicchio, {\em Internal categories and groupoids in congruence modular varieties}, J. Algebra {\bf 193} (1997), no. 2, 552--570.
\bibitem{JP2} G. Janelidze and M. C. Pedicchio, {\em  Pseudogroupoids and commutators}, Theory Appl. Categ. {\bf 8} (2001), No. 15, 408--456. 
\bibitem{Zurab-Sub} Z. Janelidze, {\em Subtractive categories, Appl. Categorical Structures}  (2005), 343--350.
\bibitem{Zurab} Z. Janelidze, {\em The pointed subobject functor, $3 \times 3$ lemmas and subtractivity of spans}, Theory Appl. Categ. {\bf 23} (2010), 221--242.
\bibitem{KMK} K. Kearnes and R. Mckenzie, \emph{Commutator Theory for Relatively Modular Quasivarieties}, Trans. Amer. Math. Soc. {\bf 331} (1992) 465--502.
\bibitem{PV} M.C. Pedicchio and E.M. Vitale, \emph{On the abstract characterization of quasi-varieties}, Algebra Universalis  {\bf 43 }(2-3)(2000), 269-278.
\bibitem{RumpL} W. Rump, {\em $L$-algebras, self-similarity, and $l$-groups,}
J. Algebra 320 (2008), no. 6, 2328–2348. 
\bibitem{RumpJAA} W. Rump, {\em L-algebras and topology}, J. Algebra Appl. (2023) 2350034 (23 pages).
\bibitem{RV} W. Rump and L. Vendramin, \emph{The prime spectrum of an 
$L$-algebra}, preprint, https://arxiv.org/pdf/2206.01001.pdf (2022).
\bibitem{Ursini} A. Ursini, {\em On subtractive varieties I}, Algebra Universalis 31 (1994) 204--222.
% \bibitem{Ursini5} A. Ursini, {\em On subtractive varieties. V. Congruence modularity and the commutators}, Algebra Universalis {\bf 43} (2000), no. 1, 51--78. 
	\end{thebibliography}
\end{document}